\newcommand{\N}{\ensuremath{\mathbb{N}}}
\newcommand{\T}{\ensuremath{\mathbb{T}}}
\newcommand{\Z}{\ensuremath{\mathbb{Z}}}
\newcommand{\R}{\ensuremath{\mathbb{R}}}
\newcommand{\C}{\ensuremath{\mathbb{C}}}
\newcommand{\ii}{\mathit{i}}
\newcommand{\e}{\textnormal{e}}
\newcommand{\eip}[1]{\textnormal{e}^{2\pi\ii{#1}}}
\newcommand{\eim}[1]{\textnormal{e}^{-2\pi\ii{#1}}}
\newcommand{\norm}[1]{\left\Vert #1\right\Vert}
\newcommand{\wrapm}[1]{\left| #1 \right|_{\mathbb{T}^d}}
\newcommand{\floor}[1]{\left\lfloor#1\right\rfloor}
\newcommand{\ceil}[1]{\left\lceil#1\right\rceil}
\newcommand{\pmat}[1]{\begin{pmatrix} #1 \end{pmatrix}}
\newcommand{\mat}[1]{\ensuremath \mathbf{\boldsymbol{#1}}}
\newcommand{\set}[1]{\left\{ #1 \right\}}
\newcommand{\setcond}[2]{\left\{#1 \mid #2\right\}}
\newcommand{\abs}[1]{\left| #1 \right |}
\renewcommand{\vec}[1]{\ensuremath \mathbf{\boldsymbol{#1}}}
\newcommand{\dirich}[1]{\ifthenelse{\isempty{#1}}
	{d_n}
	{d_n\left(#1\right)} }		
\newcommand{\dirichd}[1]{\ifthenelse{\isempty{#1}}
	{d_n'}
	{d_n'\left(#1\right)} }	
\newcommand{\dirichdd}[1]{\ifthenelse{\isempty{#1}}
	{d_n''}
	{d_n''\left(#1\right)} }	
\newcommand{\dirichm}[1]{\ifthenelse{\isempty{#1}}
	{\tilde d_n}
	{\tilde d_n\left(#1\right)} }
\newcommand{\comp}[2]{\left(#1\right)_{#2}}
\newcommand{\vnu}{{\vec{\nu}}}
\newcommand{\mInd}{\vnu}
\newcommand{\mIndSymb}{\nu}
\newcommand{\nodeSep}{q}
\renewcommand{\mathbf}[1]{\ensuremath{\boldsymbol{#1}}}
\newcommand{\diag}{\operatorname{diag}}
\DeclareMathOperator*{\smax}{\sigma_{\textnormal{max}}}
\DeclareMathOperator*{\smin}{\sigma_{\textnormal{min}}}
\DeclareMathOperator{\cond}{cond}
\renewcommand{\d}{\,\mathrm{d}}
\newtheorem{thm}{Theorem}[section]
\newtheorem{lemma}[thm]{Lemma}
\newtheorem{remark}[thm]{Remark}
\newtheorem{definition}[thm]{Definition}
\newtheorem{example}[thm]{Example}
\newtheorem{corollary}[thm]{Corollary}
\newtheorem{proposition}[thm]{Proposition}
\numberwithin{equation}{section}
\numberwithin{table}{section}
\numberwithin{figure}{section}
\newcommand{\bend}{\hspace*{0ex} \hfill \hbox{\vrule height
    1.5ex\vbox{\hrule width 1.4ex \vskip 1.4ex\hrule  width 1.4ex}\vrule
    height 1.5ex}}
\long\def\symbolfootnote[#1]#2{\begingroup%
\def\thefootnote{\fnsymbol{footnote}}\footnote[#1]{#2}\endgroup}
\crefname{lemma}{Lemma}{Lemmata}
\crefname{definition}{Definition}{Definitions}
\crefname{theorem}{Theorem}{Theorems}
\crefname{thm}{Theorem}{Theorems}
\crefname{corollary}{Corollary}{Corollaries}
\crefname{equation}{}{}
\crefname{remark}{Remark}{Remarks}
\crefname{algorithm}{Algorithm}{Algorithms}
\crefname{chapter}{Chapter}{Chapters}
\crefname{section}{Section}{Sections}
\crefname{table}{Table}{Tables}
\crefname{figure}{Figure}{Figures}
\crefname{example}{Example}{Examples}
\crefname{appendix}{Appendix}{Appendices}
\renewcommand{\thefootnote}{\fnsymbol{footnote}}
\title{Multivariate Vandermonde matrices with separated nodes on the unit circle are stable}
\date{\today}
\author{Stefan Kunis\footnotemark[1]\ \footnotemark[2] \qquad Dominik Nagel\footnotemark[1] \qquad Anna Strotmann\footnotemark[1]}
\newif\ifshow
\begin{document}
\maketitle

\begin{abstract}

	We prove explicit lower bounds for the smallest singular value and upper bounds for the condition number of rectangular, multivariate Vandermonde matrices with scattered nodes on the complex unit circle.
	Analogously to the Shannon-Nyquist criterion, the nodes are assumed to be separated by a constant divided by the used polynomial degree.
	If this constant grows linearly with the spatial dimension, the condition number is uniformly bounded.
	If it grows only logarithmically with the spatial dimension, the condition number grows slightly stronger than exponentially with the spatial dimension.
	Both results are quasi optimal and improve over all previously known results of such type.
	
	
	\noindent\textit{Key words and phrases}:
	Vandermonde matrix,
	well-separated nodes,
	condition number,
	restricted Fourier matrices,
	frequency analysis,
	super resolution.
	\medskip
	
	\noindent\textit{2010 AMS Mathematics Subject Classification} : \text{
		15A18, 
		65T40, 
		42A15.  
	}
\end{abstract}

\footnotetext[1]{
	Osnabr\"uck University, Institute of Mathematics
	\texttt{\{skunis,dnagel,astrotmann\}@uos.de}
	}

\footnotetext[2]{
	Osnabr\"uck University, Research Center of Cellular Nanoanalytics
	}

\section{Introduction}
\label{sec:introduction}

The condition number of rectangular Vandermonde matrices with nodes on the complex unit circle became important for the stability analysis of subspace methods like the Matrix Pencil method \cite{HuSa90}, ESPRIT \cite{RoKa1989} and MUSIC \cite{Sc1986}, see also \cite{StMo2005}.
A deterministic performance analysis is provided in \cite{AuBo2016, LiLiFa2020, Mo2015, ChTy2020}
and relies on the smallest and the largest singular values of such Vandermonde matrices.

In the univariate case, the condition number and the extremal singular values were studied intensively during the last years.
If nodes are on the unit circle and well-separated, tight upper bounds for the largest and lower bounds for the smallest singular value are proven in \cite{Mo2015,AuBo2019,Di2019} by means of extremal functions.
Furthermore, the situation in which nodes build clusters is investigated in \cite{BaDeGoYo2020, LiLi2021, BaDiGoYo2021, KuNa2020_1, Di2019, KuNa2020_2}.
In the multivariate case, only few results are available: The matrix in question has full rank if the normalized node separation scales with the square root of the spatial dimension \cite{KoLo2007,PoTa20132} or with the logarithm of the spatial dimension \cite{KuMoPeOh2017}, respectively.
Some quantitative results are available in \cite{KuPo2007,KuNa2020_1} under a linear scaling in the spatial dimension.

In this paper, we present new bounds for the extreme singular values of multivariate rectangular Vandermonde matrices with well-separated nodes on the complex unit circle.
The main result is a lower bound for the smallest singular value under a separation condition that is logarithmically scaling with the dimension. This makes the result in \cite{KuMoPeOh2017} quantitative, the bound itself decays slightly stronger than exponential in the spatial dimension, see \cref{thm:inghamBound}.
A second result refines \cite{KuPo2007} and provides a dimension independent lower bound under a linearly scaling assumption on the separation, see \cref{thm:lowerBoundKernel}.
\cref{tab:comparisonMultivariateWellSep} anticipates the results and simplifies the comparison to already existing ones.


\section{Preliminaries and previous results}
\label{sec:prelAndPrevRes}
Throughout the paper, $d\in\N$ always denotes the dimension and $\T:= \R/\Z= [0,1)$ the torus parametrizing the complex unit circle  $\setcond{z\in\C}{\abs{z}=1}=\setcond{\eip{t}\in\C}{t\in \T}$.
The \emph{wrap-around distance} between two nodes $\vec{t}, \vec{t}' \in \T^d$ is defined by
	\begin{equation*}
		\wrapm{\vec{t}-\vec{t}'}
		:= \min_{\vec{r}\in \Z^d} \norm{\vec{t}-\vec{t}'+\vec{r}}_\infty.
	\end{equation*}
	We note that this distance is the largest wrap-around distance in the coordinate directions.
	The \emph{minimal separation distance} of a node set $\Omega=\set{\vec{t}_1,\dotsc,\vec{t}_M}\subset \T^d$ of cardinality $M\in\N$ is given by
	\begin{equation*}
		q
		:= \min_{\vec{t}\neq\vec{t}'\in\Omega} \wrapm{\vec{t}-\vec{t}'}.
	\end{equation*}
For some parameter $N\in\N$, we call the node set \emph{well-separated} if the minimal separation distance $\nodeSep$ fulfills $qN > 1$, see \cref{fig:wellSepMultivariat} for an illustration.

Now set $\vec{z}_j := (\eip{\comp{\vec{t}}{1}},\dotsc,\eip{\comp{\vec{t}}{d}})^\top\in\C^d$, $j=1,\dotsc,M$, and let
$\mInd:=(\mIndSymb_1,\dotsc,\mIndSymb_d)^\top\in\N^d$ be a multi-index.
We are interested in the multivariate, rectangular Vandermonde matrix
\begin{equation}\label{eq:vandermondeDefMulti}
	\mat{A}	
	:=\mat{A}_N(\Omega)
	:= \pmat{\vec{z}_j^\mInd}_{\substack{ j=1,\dots,M \\ \mInd\in\N^d,\,\norm{\mInd}_\infty < N}}\in \C^{M\times N^d},
	\qquad \vec{z}_j^\mInd
	:=(\vec{z}_j)_1^{\mIndSymb_1}\cdot\dotsb\cdot(\vec{z}_j)_d^{\mIndSymb_d}.
\end{equation}
and its condition number $\cond(\mat{A})	:= {\smax(\mat{A})}/{\smin(\mat{A})}$
where $\smax(\mat{A})$ and $\smin(\mat{A})$ are the respective largest- and smallest singular values.
A necessary condition for $\smin(\mat{A})>0$ and hence having a finite condition number is that the nodes are distinct (in general, this condition is sufficient if and only if $d=1$).
Furthermore, the continuity of the smallest singular value with respect to the entries in $\mat{A}$ leads to $\lim_{q\to 0}\smin(\mat{A})=0$. Similarly, one obtains $\lim_{N\to\infty} \cond(\mat{A})=1$.
We continue with the collection of some known and easy to prove results in order to give a short overview.
\goodbreak
\begin{thm}\label{thm:condWell}
	With the above notation, we have the following results:
	\begin{enumerate}
	\item Without any further conditions, we have
	\begin{equation*}
		\smin(\mat{A})
		\le N^{\frac{d}{2}}
		\le \smax(\mat{A})
	\end{equation*}
	with equality if and only if for each pair of distinct nodes $\vec{t},\vec{t}'\in\Omega$ there exists a component $1\le s\le d$ such that $N\comp{\vec{t}-\vec{t}'}{s} \in \Z\setminus\set{0}$.
	\item If $qN>1$, then
	\begin{align*}
	  \left(N-\frac{1}{q}\right)^{\frac{1}{2}} \le &\smin(\mat{A})\text{ for }d=1
	  \text{ and }
	  \smax(\mat{A})\le \left(N+\frac{1}{q}\right)^{\frac{d}{2}} \text{ for }d\ge 1.
	\end{align*}
	\item Finally, if $\Omega=\frac{1}{M}\{0,\dotsc,M-1\}^d$ is a set of \emph{equispaced nodes} with $q=\frac{1}{M}$ and $qN\ge 1$, then
	\begin{equation*}
		\left(N-\frac{1}{q}\right)^{\frac{d}{2}}
		\le N^{\frac{d}{2}}\left(\frac{\floor{Nq}}{Nq}\right)^{\frac{d}{2}}
		= \smin(\mat{A})\le
		\smax(\mat{A})
		=  N^{\frac{d}{2}}\left(\frac{\ceil{Nq}}{Nq}\right)^{\frac{d}{2}}
		\le \left(N+\frac{1}{q}\right)^{\frac{d}{2}}
	\end{equation*}
	and the upper and lower bounds are tight for $q\searrow r/N$ and $q\nearrow r/N$, $r\in\N$, respectively.
	\end{enumerate}
\end{thm}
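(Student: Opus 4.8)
The three claims are essentially independent and I would handle them in turn. \textbf{Part (i)} is a trace identity. Since $\abs{(\mat{A})_{j\mInd}}=\abs{\vec{z}_j^{\mInd}}=1$ for every entry, $\trace(\mat{A}\mat{A}^*)=MN^d$, and as $\mat{A}\mat{A}^*\in\C^{M\times M}$ is positive semidefinite with eigenvalues $\smin(\mat{A})^2\le\dots\le\smax(\mat{A})^2$, averaging gives $\smin(\mat{A})\le N^{d/2}\le\smax(\mat{A})$. Equality forces every eigenvalue to equal $N^d$, i.e.\ $\mat{A}\mat{A}^*=N^d\mat{I}_M$; as the diagonal already equals $N^d$, this is the vanishing of every off-diagonal entry
\[
	(\mat{A}\mat{A}^*)_{jk}=\sum_{\norm{\mInd}_\infty<N}\vec{z}_j^{\mInd}\conj{\vec{z}_k^{\mInd}}=\prod_{s=1}^{d}\sum_{\mIndSymb=0}^{N-1}\eip{\mIndSymb\comp{\vec{t}_j-\vec{t}_k}{s}} .
\]
Now $\sum_{\mIndSymb=0}^{N-1}\eip{\mIndSymb\theta}$ vanishes exactly when $N\theta\in\Z$ and $\theta\notin\Z$; since $\comp{\vec{t}_j-\vec{t}_k}{s}\in(-1,1)$, the latter condition reads $\comp{\vec{t}_j-\vec{t}_k}{s}\neq 0$, so the $s$-th factor vanishes iff $N\comp{\vec{t}_j-\vec{t}_k}{s}\in\Z\setminus\set{0}$, and the product vanishes iff some such $s$ exists --- the claimed characterization (which in particular forces $M\le N^d$, since $\mat{A}\mat{A}^*$ is then invertible).

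For \textbf{part (ii)} I would pass to extremal trigonometric functions in each coordinate. For $\vec{c}\in\C^M$ put $f(\mInd):=\sum_{j}c_j\eim{\dotprod{\mInd}{\vec{t}_j}}$, extended to all $\mInd\in\Z^d$; then $\normtwo{\mat{A}^*\vec{c}}^2=\sum_{\norm{\mInd}_\infty<N}\abs{f(\mInd)}^2$, while for any $g\colon\T^d\to\R$ with absolutely summable Fourier coefficients $\sum_{\mInd\in\Z^d}\hat g(\mInd)\abs{f(\mInd)}^2=\sum_{j,k}c_j\conj{c_k}\,g(\vec{t}_j-\vec{t}_k)$, which collapses to $g(\vec{0})\normtwo{\vec{c}}^2$ once $\supp g\subseteq\setcond{\vec{\theta}}{\wrapm{\vec{\theta}}<q}$. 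For the upper bound, fix $\delta<q$, let $B^{+}$ be the Beurling--Selberg majorant of $\chi_{[-1/2,\,N-1/2]}$ of exponential type $2\pi\delta$, and set $g_1\colon\T\to\R$ via $\hat g_1(\mIndSymb)=B^{+}(\mIndSymb)$; then $\supp g_1\subseteq\setcond{\theta}{\wrap{\theta}<q}$, $\hat g_1\ge 0$ on $\Z$, $\hat g_1\ge 1$ on $\set{0,\dots,N-1}$, and $g_1(0)=\sum_{\mIndSymb\in\Z}B^{+}(\mIndSymb)=\int_{\R}B^{+}=N+\tfrac1\delta$. Then $g(\vec{\theta}):=\prod_{s=1}^{d}g_1(\theta_s)$ has $\hat g\ge 1$ on the box $\set{0,\dots,N-1}^{d}$ and $\hat g\ge 0$ elsewhere on $\Z^d$, so $\smax(\mat{A})^2\le g_1(0)^{d}=(N+\tfrac1\delta)^{d}$ for all $\delta<q$, and $\delta\nearrow q$ gives $\smax(\mat{A})\le(N+\tfrac1q)^{d/2}$. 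For the lower bound one is in dimension $d=1$ (and $M<N$, because $Mq\le 1<Nq$, so $\smin(\mat{A})^2=\lmin(\mat{A}\mat{A}^*)$); running the same computation with the Beurling--Selberg \emph{minorant} $B^{-}$ of $\chi_{[-1/2,\,N-1/2]}$, whose samples satisfy $\hat g_1(\mIndSymb)\le 1$ on $\set{0,\dots,N-1}$ and $\hat g_1(\mIndSymb)\le 0$ otherwise and for which $g_1(0)=N-\tfrac1\delta$, yields $\smin(\mat{A})^2\ge N-\tfrac1q$ --- the univariate estimate of \cite{Mo2015,AuBo2019,Di2019}. The upper bound survives in $d\ge 2$ but the lower one does not, simply because a product of nonnegative one-dimensional majorants is still a majorant of the box indicator on all of $\Z^d$, whereas two negative factors of a minorant can multiply to a positive value off the box.

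\textbf{Part (iii)} is structural. For equispaced nodes $\mat{A}$ equals, up to permuting rows and columns, the $d$-fold Kronecker power $\mat{A}_1^{\otimes d}$ of the univariate $M\times N$ matrix $\mat{A}_1=\bigl(\eip{j\mIndSymb/M}\bigr)_{j,\mIndSymb}$, whence $\smin(\mat{A})=\smin(\mat{A}_1)^{d}$ and $\smax(\mat{A})=\smax(\mat{A}_1)^{d}$. The Gram matrix $\mat{A}_1\mat{A}_1^{*}$ has $(j,k)$-entry $\sum_{\mIndSymb=0}^{N-1}\eip{(j-k)\mIndSymb/M}$, which depends only on $j-k\bmod M$, so it is circulant with eigenvalues $M\,n_m$, $m=0,\dots,M-1$, where $n_m=\#\setcond{\mIndSymb\in\set{0,\dots,N-1}}{\mIndSymb\equiv m\bmod M}\in\set{\bfloor{N/M},\ceil{N/M}}$. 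Since $qN\ge 1$ forces $N\ge M$, every $n_m\ge 1$ and both values occur, so $\smin(\mat{A}_1)^{2}=M\bfloor{N/M}$ and $\smax(\mat{A}_1)^{2}=M\ceil{N/M}$; rewriting these as $N\,\bfloor{Nq}/(Nq)$ and $N\,\ceil{Nq}/(Nq)$, raising to the $d$-th power, and using $Nq-1\le\bfloor{Nq}\le Nq\le\ceil{Nq}\le Nq+1$ produces the entire chain, the two outermost inequalities turning into equalities in the limit $Nq\to r\in\N$, which is the claimed tightness.

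The only step that is not bookkeeping is obtaining the sharp mass $N\pm\tfrac1q$ in part (ii): a Gershgorin or Schur-test bound on $\mat{A}\mat{A}^{*}$ only gives $N+\OO{q^{-1}\log M}$, so the Beurling--Selberg construction is what makes the constants sharp, and the asymmetry between its majorant (which tensorizes) and its minorant (which does not) is exactly what confines the smallest-singular-value bound to $d=1$ and motivates the rest of the paper.
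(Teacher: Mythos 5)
Your proof is correct and follows essentially the same route as the paper, which derives (i) from the diagonal of the Gram matrix, proves (ii) via the Beurling--Selberg majorant/minorant of \cite{Mo2015,AuBo2019} together with a tensor-product majorant as in \cite[Appendix~A]{Li2015}, and cites \cite[Cor.~4.11]{KuPo2007} for (iii). The difference is only one of granularity: you spell out the extremal-function identity, the circulant/Kronecker computation for equispaced nodes, and the reason the minorant fails to tensorize (two negative factors giving a positive product), all of which the paper delegates to its references.
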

\begin{proof}
 The inequalities in the first result follow from the diagonal entries of $\mat{A}\mat{A}^*$ being $N^d$.
 Equality is equivalent to $\mat{A}\mat{A}^* = N^d\mat{I}_M$ ($\mat{I}_M$ denotes the identity matrix of size $M$) and direct computation shows that these off diagonal are all zero if and only for any distinct pair of nodes $\vec{t},\vec{t}'$ there is at least one component of $\vec{t}-\vec{t}'$ being in $\Z/N \setminus \set{0}$. See \cref{fig:nodesGridQuasiGrid} for two examples and \cite{BeFe2007} for the result when $d=1$.
 
 The second result is due to \cite{Mo2015,AuBo2019} for $d=1$ and uses extremal minorant and majorant functions, respectively.
 A tensor product majorant can be used similarly to \cite[Appendix A]{Li2015} to prove the upper bound when $d>1$.
 The third result can be found in \cite[Cor.~4.11]{KuPo2007}.
\end{proof}

\begin{figure}[ht]
	\begin{subfigure}{0.32\textwidth}
	\centering
	    \includegraphics[width=0.6\linewidth]{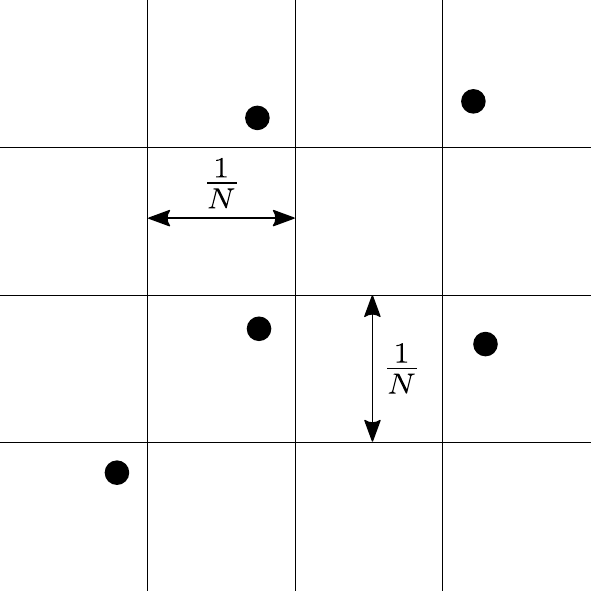}
	\end{subfigure}
	\begin{subfigure}{0.32\textwidth}
	\centering
		\includegraphics[width=0.6\linewidth]{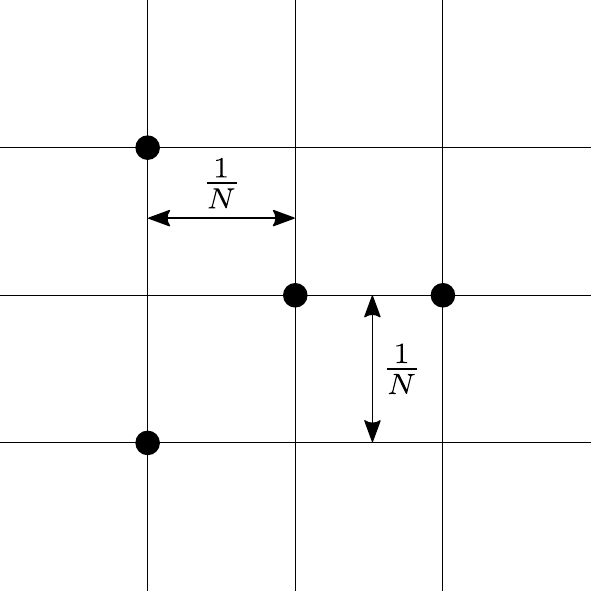}
	\end{subfigure}
	\begin{subfigure}{0.32\textwidth}
	\centering
		\includegraphics[width=0.6\linewidth]{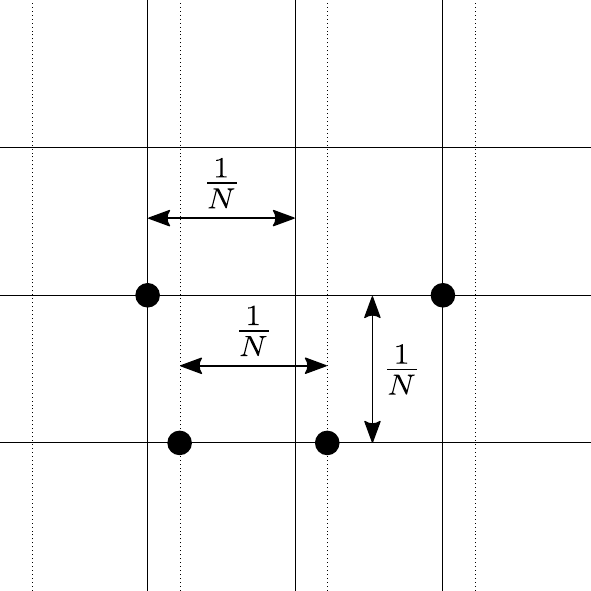}
	\end{subfigure}
	\caption{Left:~Well-separated node set in $\T^2$. Middle and right:~Examples of node sets with perfectly conditioned Vandermonde matrix for $d=2$; left:~nodes on a grid with width $1/N$;~right:~each two distinct nodes have one coordinate direction with separation $k/N, 0\neq k\in\Z$.}
	\label{fig:nodesGridQuasiGrid}
	\label{fig:wellSepMultivariat}
\end{figure}

\cref{thm:condWell} ii) and iii) show that the behaviour of the largest singular value with respect to the separation distance is completely understood for $qN>1$.
The very same is true for the smallest singular value only in the univariate situation $d=1$. 
The multivariate situation is much more involved.
\cref{tab:comparisonMultivariateWellSep} anticipates the result and simplifies the classification of the results and the comparison.

\begin{table}[ht]
\centering
\begin{tabular}{l||c|c}
 & $q(N-1) \ge $ & $\smin(\mat{A})(N-1)^{-\frac{d}{2}} \ge $\\ 
	\hline\hline
	\cref{thm:lowerBoundKernel} using \cite[Cor.~4.7]{KuPo2007} & $4d$& $0.9$\\
	\hline
	\cite[Ex.~4.4]{KuNa2020_1} and \cref{thm:wellSeparatedFromMulti}& $6d$ &
	$\frac{1}{3}d^{-d/4}$\\
	\hline
	 \cite[Cor.~3.3]{PoTa20132} using \cite{KoLo2007} & $\sqrt{d}$& is positive\\
	\hline
	\cref{rem:r123} with $r=1$ & $1.01\sqrt{d+2}$& $\frac{1}{2} d^{-d/4}$\\
	\hline
	\cite[Cor.~2.10]{KuMoPeOh2017} & $3+2\log d$ & is positive\\
	\hline
	\cref{thm:inghamBound} & $4.5+2.6\log d$ & $5.6^{-d}\left(1+\log d\right)^{-d/4}$
\end{tabular}\caption{Bounds for the smallest singular value of multivariate Vandermonde matrices with well-separated nodes showing the dependency on the spatial dimension $d$.} 
\label{tab:comparisonMultivariateWellSep}
\end{table}

\goodbreak

\section{Main result via a localizing function}
\label{sec:mainResult}

The following function was already used in \cite{KuMoPeOh2017} to ensure the full rank of $\mat{A}$ under the condition that the nodes are separated by $c_d/(N-1)$ where the constant $c_d>1$ is logarithmically dependent on the spatial dimension $d$.

\begin{definition}[{\cite{KuMoPeOh2017}}]
	\label{def:function}
		Let $d\in\mathbb{N}$ the dimension, $r\in\mathbb{N}$ and $p:=2r$.
		Furthermore, let $b,h >0 $ and $\varphi\colon \R\to\R$,
	\begin{equation*}
		\varphi(x)
		:=\begin{cases}
			\Big(1-\big(\frac{2x}{h}\big)^2\Big)^r,
			&\text{$\vert{x}\vert<\frac{h}{2}$,}\\
			0,
			&\text{else.}
			\end{cases}
	\end{equation*}
	We define the function $\psi\colon\R^d\to\R$,
	\begin{equation}
	\label{psi}
	\psi:=\bigg((2\pi b)^p-(-1)^r\sum_{s=1}^d\frac{\partial^p}{\partial x_s^p}\bigg)\bigotimes_{l=1}^d\varphi\ast\varphi.
	\end{equation}
\end{definition}


\begin{lemma}[{\cite[Lemma.~2.1]{KuMoPeOh2017}}]
	The function $\psi$ from \cref{def:function} has the following properties:
	\begin{enumerate}
	\item
		Its Fourier transform $\widehat{\psi}(y)=\int_{\R^d}\psi(x)\eim{y x}\d x$ is bounded and it holds
		\begin{equation*}
		\widehat{\psi}(y)
			\begin{cases}
			\ge 0,&\norm{y}_p\le b, \\
			\le 0,& \norm{y}_p \ge b,
			\end{cases}
		\end{equation*}
	\item
		its support is given by $supp(\psi)=[-h,h]^d$,
	\item $\psi(0)>0$, if $h>C_p\sqrt[p]{d}/b$ with $C_p\le\frac{2p+3}{e\pi}$.
	\end{enumerate}
\end{lemma}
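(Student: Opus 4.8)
The plan is to push everything to one dimension, using that $F:=\bigotimes_{l=1}^d\varphi\ast\varphi$ is a tensor product and that the operator $(2\pi b)^p-(-1)^r\sum_{s=1}^d\partial^p/\partial x_s^p$ acts one coordinate at a time. For the support statement, $\supp\varphi=[-\tfrac h2,\tfrac h2]$ gives $\supp(\varphi\ast\varphi)=[-h,h]$ and $\supp F=[-h,h]^d$, and a local operator cannot enlarge the support, so $\supp\psi\subseteq[-h,h]^d$. For the reverse inclusion one uses that $\varphi\ast\varphi$ is even, piecewise polynomial (hence piecewise real-analytic) and vanishes at $\pm h$ to order $2r+1$, so $(\varphi\ast\varphi)^{(p)}=\varphi^{(r)}\ast\varphi^{(r)}$ is continuous and vanishes at $\pm h$ only to first order: near a point of $(-h,h)^d$ with exactly one coordinate close to $\pm h$ the corresponding summand $\partial_s^p$ dominates and is nonzero, and $\psi$ is real-analytic and even in each variable on each open octant, so it cannot vanish on an open set without vanishing identically; hence $\supp\psi=[-h,h]^d$.

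For the sign statement I would compute $\widehat\psi$ directly. Since $\varphi$ is real and even, $\widehat\varphi$ is real, so $\widehat{\varphi\ast\varphi}=\widehat\varphi^{\,2}\ge0$ and $\widehat F(y)=\prod_{l=1}^d\widehat\varphi(y_l)^2\ge0$; and $\partial_s^p$ is carried to multiplication by $(2\pi\ii y_s)^p=(2\pi)^p(-1)^r y_s^p$ because $p=2r$. Collecting terms and using that $p$ is even,
\[
\widehat\psi(y)=(2\pi)^p\Bigl(b^p-\sum_{s=1}^d y_s^p\Bigr)\prod_{l=1}^d\widehat\varphi(y_l)^2=(2\pi)^p\bigl(b^p-\norm{y}_p^p\bigr)\prod_{l=1}^d\widehat\varphi(y_l)^2,
\]
so $\sgn\widehat\psi(y)=\sgn\bigl(b^p-\norm{y}_p^p\bigr)$, which is exactly the claimed sign pattern. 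Boundedness follows because $\varphi^{(r)}\in L^\infty(\R)$ has compact support, hence $\varphi^{(r)}\ast\varphi^{(r)}$ is continuous, hence $\psi\in C_c(\R^d)\subset L^1(\R^d)$ and $\widehat\psi\in L^\infty(\R^d)$ (alternatively from $\abs{\widehat\varphi(y_l)}=\OO{\abs{y_l}^{-r-1}}$).

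For the statement $\psi(0)>0$, the tensor structure gives
\[
\psi(0)=(2\pi b)^p\bigl((\varphi\ast\varphi)(0)\bigr)^d-(-1)^r d\,(\varphi\ast\varphi)^{(p)}(0)\bigl((\varphi\ast\varphi)(0)\bigr)^{d-1}.
\]
Now $(\varphi\ast\varphi)(0)=\norm{\varphi}_{L^2(\R)}^2$, and $(\varphi\ast\varphi)^{(p)}(0)=(\varphi^{(r)}\ast\varphi^{(r)})(0)=(-1)^r\norm{\varphi^{(r)}}_{L^2(\R)}^2$ since $\varphi$ even forces $\varphi^{(r)}(-x)=(-1)^r\varphi^{(r)}(x)$; therefore $(-1)^r(\varphi\ast\varphi)^{(p)}(0)=\norm{\varphi^{(r)}}_{L^2(\R)}^2$ and $\psi(0)=\norm{\varphi}_{L^2(\R)}^{2(d-1)}\bigl((2\pi b)^p\norm{\varphi}_{L^2(\R)}^2-d\,\norm{\varphi^{(r)}}_{L^2(\R)}^2\bigr)$. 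Hence $\psi(0)>0$ iff $(2\pi b)^p>d\,\norm{\varphi^{(r)}}_{L^2(\R)}^2/\norm{\varphi}_{L^2(\R)}^2$. Substituting $u=2x/h$ and setting $g(u):=(1-u^2)^r$ turns this ratio into $(2/h)^p R_r$ with $R_r:=\norm{g^{(r)}}_{L^2(-1,1)}^2/\norm{g}_{L^2(-1,1)}^2$, so the condition reads $\pi bh>(R_r)^{1/p}\sqrt[p]d$, i.e. one may take $C_p=(R_r)^{1/p}/\pi$.

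It remains to estimate $C_p$. Rodrigues' formula $g^{(r)}=\frac{d^r}{du^r}(1-u^2)^r=(-1)^r2^r r!\,P_r$ with the Legendre polynomial $P_r$ gives $\norm{g^{(r)}}_{L^2(-1,1)}^2=\tfrac{2}{2r+1}4^r(r!)^2$, while $\norm{g}_{L^2(-1,1)}^2=\int_{-1}^1(1-u^2)^{2r}\d u=\tfrac{2^{4r+1}((2r)!)^2}{(4r+1)!}$, so $R_r=\tfrac{(r!)^2(4r+1)}{4^r(2r+1)}\binom{4r}{2r}$. One then has to verify $(R_r)^{1/p}=(R_r)^{1/(2r)}\le\tfrac{2p+3}{e}=\tfrac{4r+3}{e}$, which yields $C_p\le\tfrac{2p+3}{e\pi}$. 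Inserting Stirling-type bounds such as $r!\le\sqrt{2\pi r}(r/e)^r e^{1/(12r)}$ and $\binom{4r}{2r}\le16^r/\sqrt{2\pi r}$ reduces this to an inequality of the form $\bigl(c\sqrt{r}\,e^{O(1/r)}\bigr)^{1/(2r)}\le2+\tfrac3{2r}$, which holds with generous slack for $r\ge2$ and is checked directly for $r=1$ from $R_1=\tfrac52$. I expect this last explicit constant chase — making the Stirling correction terms fit inside the available margin, in particular at the small value $r=1$ — to be the only genuinely delicate point; the first three parts are bookkeeping with the tensor and convolution structure.
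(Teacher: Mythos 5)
This lemma is cited from \cite{KuMoPeOh2017} and the paper itself gives no proof, so there is no in-paper argument to compare against; the closest check is \cref{la:evalsInZero}, whose proof quotes from that source the identities $\widehat\psi(v)=\bigl((2\pi b)^p-\sum_{s}(2\pi v_s)^p\bigr)\prod_{l}\widehat\varphi(v_l)^2$ and $\psi(0)=(\varphi\ast\varphi)(0)^{d-1}\bigl((2\pi b)^p(\varphi\ast\varphi)(0)-d\,4^p(r!)^2/((p+1)h^{p-1})\bigr)$. Your tensor/Plancherel route reproduces both: your $(\varphi\ast\varphi)(0)=\normtwo{\varphi}^2=\frac h2\int_{-1}^1(1-u^2)^p\d u$ matches the Gamma expression, and your $(-1)^r(\varphi\ast\varphi)^{(p)}(0)=\normtwo{\varphi^{(r)}}^2=(2/h)^{p-1}\,\frac{2\cdot 4^r(r!)^2}{2r+1}=\frac{4^p(r!)^2}{(p+1)h^{p-1}}$ is the same second term. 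The only genuinely delicate step, as you say, is verifying $R_r^{1/(2r)}\le(4r+3)/e$ so that $C_p=R_r^{1/p}/\pi\le(2p+3)/(e\pi)$; this does hold with slack (asymptotically $R_r^{1/(2r)}\sim 2r/e$ against $4r/e$, and $R_1=5/2$, $R_2=63/2$, $R_3=432432/448$ all satisfy it comfortably), so the argument goes through as outlined. Your sign and support arguments are standard and correct; the real-analyticity justification for $\supp\psi\supseteq[-h,h]^d$ is a bit terse but fine, since $\psi$ restricted to each open octant is polynomial and $\psi(0)>0$ rules out its vanishing identically there.
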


While these properties were sufficient for proving $\smin(\mat{A})>0$, they do not allow to directly deduce a quantitative lower bound.
More advanced properties of $\psi$ are presented in the next lemma and utilized for the proof of an explicit lower bound on the smallest singular value in \cref{la:lowerBoundInghamConst} and \cref{thm:inghamBound}.

\begin{lemma}\label{la:evalsInZero}
	Let $\Gamma$ denote the gamma function. For the function $\psi$ from \cref{def:function}, we have
	\begin{equation*}
		\psi(0)
		= \Big(\frac{h\sqrt{\pi}(2r)!}{2\Gamma\big(2r+\frac{3}{2}\big)}\Big)^d (2\pi b)^{2r} -\Big(\frac{h\sqrt{\pi}(2r)!}{2\Gamma\big(2r+\frac{3}{2}\big)}\Big)^{d-1} \frac{d4^{2r}(r!)^2}{(2r+1)h^{2r-1}}
	\end{equation*}
	and
	\begin{equation*}
	\max_{v\in\mathbb{R}^d}\widehat{\psi}(v)
	=\widehat{\psi}(0)
	=(2\pi b)^p \bigg(\frac{h\sqrt{\pi}r!}{2\Gamma\big(r+\frac{3}{2}\big)}\bigg)^{2d}.
\end{equation*}
\end{lemma}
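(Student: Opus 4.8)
The plan is to reduce everything to one–dimensional integrals and exploit the tensor–product structure. Write $g:=\bigotimes_{l=1}^{d}\varphi\ast\varphi$, so that $g(x)=\prod_{l=1}^{d}(\varphi\ast\varphi)(x_l)$ and $\psi=\bigl((2\pi b)^{p}-(-1)^{r}\sum_{s=1}^{d}\partial_{x_s}^{p}\bigr)g$ by \eqref{psi}. Differentiating the product and evaluating at the origin, $\partial_{x_s}^{p}g(0)=(\varphi\ast\varphi)^{(p)}(0)\,\bigl((\varphi\ast\varphi)(0)\bigr)^{d-1}$, hence
\begin{equation*}
  \psi(0)=(2\pi b)^{p}\bigl((\varphi\ast\varphi)(0)\bigr)^{d}-(-1)^{r}\,d\,(\varphi\ast\varphi)^{(p)}(0)\,\bigl((\varphi\ast\varphi)(0)\bigr)^{d-1}.
\end{equation*}
On the Fourier side, using $p=2r$ and the paper's convention $\widehat{\partial_{x_s}^{p}g}(v)=(2\pi\ii v_s)^{p}\widehat g(v)=(2\pi)^{p}(-1)^{r}v_s^{p}\widehat g(v)$, the two factors $(-1)^{r}$ cancel and
\begin{equation*}
  \widehat\psi(v)=(2\pi)^{p}\Bigl(b^{p}-\textstyle\sum_{s=1}^{d}v_s^{p}\Bigr)\widehat g(v)=(2\pi)^{p}\bigl(b^{p}-\norm{v}_{p}^{p}\bigr)\prod_{l=1}^{d}\widehat\varphi(v_l)^{2}.
\end{equation*}

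Next I would record the needed one–dimensional integrals. Since $\varphi$ is even and supported on $[-h/2,h/2]$, the substitution $u=2x/h$ and the Beta integral $\int_{0}^{1}(1-u^{2})^{a}\d u=\tfrac12 B(\tfrac12,a+1)=\tfrac{\sqrt{\pi}\,\Gamma(a+1)}{2\Gamma(a+3/2)}$ give $\widehat\varphi(0)=\int_{\R}\varphi=\tfrac{h\sqrt{\pi}\,r!}{2\Gamma(r+3/2)}$ and $(\varphi\ast\varphi)(0)=\int_{\R}\varphi^{2}=\tfrac{h\sqrt{\pi}(2r)!}{2\Gamma(2r+3/2)}$, which already produce the two leading tensor factors. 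For the remaining quantity $(\varphi\ast\varphi)^{(p)}(0)$ I would distribute the $p=2r$ derivatives across the convolution, $(\varphi\ast\varphi)^{(2r)}=\varphi^{(r)}\ast\varphi^{(r)}$ (legitimate since $\varphi\in C^{r-1}$ has a piecewise–polynomial, hence square–integrable, $r$-th derivative, so $\varphi^{(r)}\ast\varphi^{(r)}$ is continuous), then evaluate at $0$ and use $\varphi^{(r)}(-x)=(-1)^{r}\varphi^{(r)}(x)$ to obtain $(-1)^{r}(\varphi\ast\varphi)^{(2r)}(0)=\int_{\R}\bigl(\varphi^{(r)}\bigr)^{2}\ge 0$. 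This last integral I would evaluate either via Rodrigues' formula $\varphi^{(r)}(x)=(-1)^{r}(4/h)^{r}r!\,P_r(2x/h)$ together with $\int_{-1}^{1}P_r^{2}=\tfrac{2}{2r+1}$, or by integrating by parts $r$ times — all boundary terms vanish because $\varphi$ and its first $r-1$ derivatives vanish at $\pm h/2$ — which reduces it to $(-1)^{r}\varphi^{(2r)}(0)\int_{\R}\varphi$ with $\varphi^{(2r)}(0)=(-1)^{r}(2r)!\,(4/h^{2})^{r}$ the constant top coefficient of the degree-$2r$ polynomial $\varphi$, followed by the identity $\tfrac{(2r)!\sqrt{\pi}}{2\Gamma(r+3/2)}=\tfrac{4^{r}r!}{2r+1}$ (Legendre duplication). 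Either way $(-1)^{r}(\varphi\ast\varphi)^{(2r)}(0)=\tfrac{4^{2r}(r!)^{2}}{(2r+1)h^{2r-1}}$, and inserting the three values into the displayed expression for $\psi(0)$ yields the first claim.

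For the second claim, note that $\widehat\varphi$ is real because $\varphi$ is real and even, so $\widehat g(v)=\prod_l\widehat\varphi(v_l)^{2}\ge 0$, and since $\varphi\ge 0$ one has $\abs{\widehat\varphi(v_l)}\le\int_{\R}\varphi=\widehat\varphi(0)$, whence $0\le\widehat g(v)\le\widehat g(0)=\widehat\varphi(0)^{2d}$. Combined with $b^{p}-\norm{v}_{p}^{p}\le b^{p}$ this gives $\widehat\psi(v)\in[\,0,(2\pi b)^{p}\widehat g(0)\,]$ whenever $\norm{v}_{p}\le b$, while $\widehat\psi(v)\le 0$ whenever $\norm{v}_{p}>b$; hence the maximum is attained at $v=0$ and equals $(2\pi b)^{p}\widehat\varphi(0)^{2d}=(2\pi b)^{p}\bigl(\tfrac{h\sqrt{\pi}\,r!}{2\Gamma(r+3/2)}\bigr)^{2d}$.

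The one genuinely delicate point is the evaluation of $(\varphi\ast\varphi)^{(2r)}(0)$: one must justify moving all $2r$ derivatives onto the two factors even though $\varphi$ is only $(r-1)$-times classically differentiable, and then identify the integral $\int(\varphi^{(r)})^{2}$ — this is where Rodrigues' formula (equivalently, the Gamma–duplication identity applied after integration by parts) does the real work. Everything else is routine bookkeeping with Beta integrals and the tensor–product structure.
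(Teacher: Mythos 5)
Your proof is correct, and the overall strategy is the same one the paper uses: exploit the tensor-product structure of $\psi$ to reduce everything to one-dimensional Beta-type integrals, and bound $\widehat\psi$ by comparing $\widehat\varphi$ at a general point with its value at $0$. The one real difference is that the paper leans on a citation: it imports the intermediate identity
\[
\psi(0)=\bigl(\varphi\ast\varphi\bigr)^{d-1}(0)\left((2\pi b)^{p}\,\frac{h\sqrt{\pi}\,p!}{2\Gamma\bigl(p+\tfrac32\bigr)}-\frac{d\,4^{p}(r!)^{2}}{(p+1)h^{p-1}}\right)
\]
directly from the proof of Lemma~2.1 in \cite{KuMoPeOh2017} and only supplies the remaining evaluation of $\varphi\ast\varphi(0)$, whereas you rederive that identity from scratch. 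Your derivation of the term $(-1)^{r}(\varphi\ast\varphi)^{(2r)}(0)=\int(\varphi^{(r)})^{2}=\tfrac{4^{2r}(r!)^{2}}{(2r+1)h^{2r-1}}$ — via $(\varphi\ast\varphi)^{(2r)}=\varphi^{(r)}\ast\varphi^{(r)}$ and either Rodrigues' formula for Legendre polynomials or $r$-fold integration by parts with the Legendre duplication identity — is sound (the weak-derivative justification and the vanishing boundary terms are exactly as you state), and the case split on $\norm{v}_p\lessgtr b$ in the second part is a slightly more explicit version of what the paper leaves implicit. So: same route, self-contained rather than cited; this buys the reader nothing new mathematically but makes the lemma verifiable without consulting \cite{KuMoPeOh2017}.
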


\begin{proof}
 The proof \cite[Lemma 2.1]{KuMoPeOh2017} already showed
	\begin{align*}
		\psi(0)
		&=\big(\varphi\ast\varphi\big)^{d-1}(0)\bigg((2\pi b)^p \frac{h\sqrt{\pi}p!}{2\Gamma\big(p+\frac{3}{2}\big)}-\frac{d4^p(r!)^2}{(p+1)h^{p-1}}\bigg).
	\end{align*}
	Together with
	\begin{align*}
		\varphi\ast\varphi(0)
		&=\frac{h}{2}\int_{-1}^1\big(1-y^2\big)^p\mathrm{d}y
		=\frac{h\sqrt{\pi}p!}{2\Gamma\Big(p+\frac{3}{2}\Big)},
	\end{align*}
	this yields the first result.
	The second claim follows from
	\begin{align*}
		\widehat{\psi}(v)
		=\bigg((2\pi b)^p-\sum_{s=1}^d(2\pi v_s)^p\bigg)\prod_{l=1}^d\big(\widehat{\varphi}(v_l)\big)^2
		\end{align*}
	and
	\begin{align*}
		\allowdisplaybreaks
		\big\vert{\widehat{\varphi}(v)}\big\vert
		&\le\int_{\mathbb{R}}\vert{\varphi(x)}\vert \mathrm{d}x
		=\widehat\varphi(0)
		=\frac{h}{2}\int_{-1}^1 (1-x^2)^r \mathrm{d}x
		=\frac{h\sqrt{\pi} r!}{2\Gamma\left(r+\frac{3}{2}\right)}.
	\end{align*}
\end{proof}

\begin{lemma}\label{la:lowerBoundInghamConst}
	For the function $\psi$ from \cref{def:function}, we have
	\begin{align*}
		\frac{\psi(0)}{\widehat{\psi}(0)}
		&> \left(\sqrt{\frac{2}{\pi}} \cdot \frac{\sqrt{r}}{h}\right)^d \left(1- d \frac{2\e^2}{\sqrt{\pi}} \sqrt{r} \left(\frac{2r}{\pi\e \cdot bh}\right)^{2r}\right).
	\intertext{Choosing $h= \frac{2p+3}{\e\pi} \sqrt[p]{d}\cdot \frac{1}{b}$ further yields}
		\frac{\psi(0)}{\widehat{\psi}(0)}
		&> \frac{1}{2} \left(\frac{4}{3} \cdot \frac{b}{\sqrt{p} \cdot\sqrt[p]{d}}\right)^d
	\intertext{and setting $p=2\ceil{\log(d)}$ finally leads to $h\le \frac{4\log(d)+7}{b\pi}$ and}
		\frac{\psi(0)}{\widehat{\psi}(0)}
		&> \frac{1}{2} \left(\frac{4}{3\sqrt{2}\e^2} \cdot \frac{b}{\sqrt{\log( d)+1}}\right)^d.
	\end{align*}
\end{lemma}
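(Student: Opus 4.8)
The plan is to combine the two closed forms from \cref{la:evalsInZero}. Since $\widehat{\psi}(0)=(2\pi b)^{2r}\,\widehat{\varphi}(0)^{2d}$ with $\widehat{\varphi}(0)=\tfrac{h\sqrt{\pi}r!}{2\Gamma(r+3/2)}$ and $\varphi\ast\varphi(0)=\tfrac{h\sqrt{\pi}(2r)!}{2\Gamma(2r+3/2)}$, one may factor
\[
\frac{\psi(0)}{\widehat{\psi}(0)}
=\left(\frac{\varphi\ast\varphi(0)}{\widehat{\varphi}(0)^2}\right)^{d}\left(1-\frac{d\,4^{2r}(r!)^2}{\varphi\ast\varphi(0)\,(2r+1)\,h^{2r-1}\,(2\pi b)^{2r}}\right).
\]
The problem thus splits into a lower bound for the leading factor $\varphi\ast\varphi(0)/\widehat{\varphi}(0)^2=\normtwo{\varphi}^{2}/\normone{\varphi}^{2}$ and an upper bound for the bracketed correction.

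For the leading factor I would use the identity $\varphi\ast\varphi(0)/\widehat{\varphi}(0)^2=\tfrac{2}{h\sqrt{\pi}}\cdot\tfrac{\Gamma(2r+1)\,\Gamma(r+3/2)^2}{\Gamma(2r+3/2)\,\Gamma(r+1)^2}$, peel off the half-integer shifts via $\Gamma(z+3/2)=(z+1/2)\Gamma(z+1/2)$, and then apply Wendel's inequality in the forms $\Gamma(n+1)/\Gamma(n+1/2)\ge\sqrt{n}$ (with $n=2r$) and $\Gamma(n+1)/\Gamma(n+1/2)\le\sqrt{n+1/2}$ (with $n=r$). After the cancellations the bound collapses to $\varphi\ast\varphi(0)/\widehat{\varphi}(0)^2\ge\tfrac{2}{h\sqrt{\pi}}\cdot\tfrac{r+1/2}{2r+1/2}\sqrt{2r}\ge\tfrac{2}{h\sqrt{\pi}}\cdot\tfrac12\sqrt{2r}=\sqrt{2/\pi}\,\sqrt{r}/h$. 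For the correction, rewriting it as $\tfrac{2d\,\Gamma(2r+3/2)\,(2\e)^{2r}(r!)^2}{\sqrt{\pi}\,(2r)!\,(2r+1)\,(\pi\e bh)^{2r}}$ separates out the wanted factor $(2r/(\pi\e bh))^{2r}$, so only $\tfrac{\Gamma(2r+3/2)(r!)^2}{(2r)!(2r+1)}$ remains to be estimated; the Wendel bound $\Gamma(2r+3/2)\le\tfrac{2r+1/2}{\sqrt{2r}}(2r)!$ (which cancels the $(2r)!$) together with the Stirling bound $(r!)^2\le 2\pi r\,(r/\e)^{2r}\e^{1/(6r)}$ produces exactly the stated prefactor $\tfrac{2\e^2}{\sqrt{\pi}}\sqrt{r}$. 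Inserting both estimates into the factored identity gives the first displayed inequality.

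For the two refinements one substitutes $h=\tfrac{2p+3}{\e\pi}\sqrt[p]{d}\,b^{-1}$ (so $bh=\tfrac{2p+3}{\e\pi}\sqrt[p]{d}$ and $p=2r$) into the factored identity. The leading factor becomes $\big(\tfrac{\e\sqrt{\pi}\sqrt{p}\,b}{(2p+3)\sqrt[p]{d}}\big)^{d}=\big(\tfrac{3\e\sqrt{\pi}p}{4(2p+3)}\big)^{d}\big(\tfrac{4b}{3\sqrt{p}\sqrt[p]{d}}\big)^{d}$ with $\tfrac{3\e\sqrt{\pi}p}{4(2p+3)}\ge\tfrac{3\e\sqrt{\pi}}{14}>1$ for $p\ge2$, whereas the correction becomes independent of $d$ (the powers of $d$ cancel) and stays below $\tfrac12$ for all $r\ge1$ (the worst case is $r=1$, where it equals $\tfrac{15}{24}(2\e/7)^2\approx0.38$), so the bracket is $\ge\tfrac12$; multiplying the two gives the second inequality. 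Finally, with $p=2\ceil{\log d}$ one uses $\ceil{\log d}\le\log d+1$ and $d^{1/(2\ceil{\log d})}\le d^{1/(2\log d)}=\sqrt{\e}$ to turn the choice of $h$ into $h\le\tfrac{4\log d+7}{b\pi}$ and to bound $\sqrt{p}\sqrt[p]{d}\le\sqrt{2\e}\,\sqrt{\log d+1}$; substituting this last bound into the second inequality and collecting constants yields the third.

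I expect the main obstacle to be the correction term. In the proof of the first inequality the Wendel/Stirling error factors must be tracked tightly enough to land precisely on the clean prefactor $\tfrac{2\e^2}{\sqrt{\pi}}\sqrt{r}$; and for the refinements one needs the (now $d$-free) correction at the critical $h$ to stay strictly below $\tfrac12$ for every $r\ge1$, where the margin is smallest at $r=1$ (true value $\approx0.38$), so a sharper Gamma-ratio estimate such as $\Gamma(n+1/2)/\Gamma(n+1)\le(n+1/4)^{-1/2}$ may be required to make the Stirling bookkeeping close. A related subtlety is that the first displayed inequality, evaluated at the critical $h$, is by itself slightly too weak to give the second, so the refinements are best derived directly from the factored identity with this $h$-specific estimate of the correction rather than from the first inequality as stated.
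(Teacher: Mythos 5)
Your approach reproduces the paper's proof nearly step for step: start from \cref{la:evalsInZero}, factor $\psi(0)/\widehat\psi(0)$ as $\big(\varphi\ast\varphi(0)/\widehat\varphi(0)^2\big)^d$ times a bracketed correction, lower-bound the leading factor by the Gautschi--Wendel inequality, and upper-bound the correction by Wendel plus Stirling, then substitute $h=\frac{2p+3}{\e\pi}\sqrt[p]{d}\,b^{-1}$ and $r=\ceil{\log d}$. The one place you genuinely go beyond the paper is your closing caveat, and it is a real catch: the first displayed inequality of the lemma, fed the critical $h$, is \emph{not} strong enough to produce the second inequality at $r=1$. The paper's own chain asserts $1-\frac{2\e^2}{\sqrt\pi}\sqrt r\big(\tfrac{2r}{4r+3}\big)^{2r}\ge\tfrac12$, but at $r=1$ this is $1-\frac{2\e^2}{\sqrt\pi}(2/7)^2\approx 0.32<\tfrac12$, so that intermediate step fails for $r=1$ (the case relevant to $d=2$). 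The exact correction at $r=1$ and the critical $h$ is $\tfrac{5\e^2}{98}\approx 0.38$, so the lemma's second inequality is nevertheless true; the loss lies entirely in the factor $\e^2$ coming from the crude Stirling bound $(r!)^2\le\e^2 r^{2r+1}\e^{-2r}$. Your remedy---estimate the ($d$-free) correction at the critical $h$ directly, using Wendel together with the sharper Stirling bound $(r!)^2\le 2\pi r(r/\e)^{2r}\e^{1/(6r)}$---yields a constant $\sqrt2\pi\,\e^{1/(6r)}/\sqrt\pi$ in place of $2\e^2/\sqrt\pi$, giving $\approx 0.40<\tfrac12$ at $r=1$ and less for $r\ge2$, which is enough to carry the proof; the two subsequent refinements then go through exactly as you sketch them.
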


\begin{proof} We start from \cref{la:evalsInZero} and calculate
	\begin{align}
		\frac{\psi(0)}{\widehat{\psi}(0)}
		&=\frac{\Big(\frac{h\sqrt{\pi}(2r)!}{2\Gamma\big(2r+\frac{3}{2}\big)}\Big)^d(2\pi b)^{2r} -\Big(\frac{h\sqrt{\pi}(2r)!}{2\Gamma\big(2r+\frac{3}{2}\big)}\Big)^{d-1}\frac{d4^{2r}(r!)^2}{(2r+1)h^{2r-1}}}{(2\pi b)^{2r} \Big(\frac{h\sqrt{\pi}\cdot r!}{2\Gamma\big(r+\frac{3}{2}\big)}\Big)^{2d}}\nonumber\\
		&=\left(\frac{2(2r)!\Gamma^{2}\big(r+\frac{3}{2}\big)}{\Gamma\big(2r+\frac{3}{2}\big)h\sqrt{\pi}(r!)^{2}}\right)^d\bigg(1-\frac{d 2^{2r+1}\Gamma\big(2r+\frac{3}{2}\big)(r!)^2}{(2r)!h^{2r}\sqrt{\pi}(2r+1)\pi^{2r}b^{2r}}\bigg).
		\label{eq:lowerBoundInghamConst1}
	\end{align}
	The Gautschi-Wendel inequality gives
	\begin{equation}\label{eq:lowerBoundInghamConst2}
		\sqrt{x-\frac{1}{2}}
		< \frac{\Gamma(x+\frac{1}{2})}{\Gamma(x)}
		< \sqrt{x+\frac{1}{2}}.
	\end{equation}
	for $x>1/2$.
	In combination with the definition of the Gamma function, we obtain for the expression in the first bracket of \cref{eq:lowerBoundInghamConst1}
	\begin{equation}
	\begin{split}
		\frac{2(2r)!\Gamma^{2}\big(r+\frac{3}{2}\big)}{\Gamma\big(2r+\frac{3}{2}\big)h\sqrt{\pi}(r!)^{2}}
		&=\frac{2}{h\sqrt{\pi}} \cdot \frac{\Gamma(2r+1)}{\Gamma\big(2r+\frac{3}{2}\big)} \cdot \left(\frac{\Gamma\big(r+\frac{3}{2}\big)}{\Gamma(r+1)}\right)^{2}
		> \frac{2}{h\sqrt{\pi}} \cdot\frac{r+\frac{1}{2}}{\sqrt{2r+\frac{3}{2}}}\\
		&= \frac{1}{h\sqrt{\pi}} \cdot \frac{2r+1}{\sqrt{2r+\frac{3}{2}}}
		> \frac{\sqrt{2r+\frac{1}{2}}}{h\sqrt{\pi}}
		> \sqrt{\frac{2}{\pi}} \cdot \frac{\sqrt{r}}{h}.
	\end{split}
	\end{equation}
	Again with \cref{eq:lowerBoundInghamConst2} and Stirling's formula, the second summand in the right bracketed term of \cref{eq:lowerBoundInghamConst1} can be simplified to
	\begin{equation}\label{eq:lowerBoundInghamConst3}
	\begin{split}
		\frac{2^{2r+1}d\Gamma\big(2r+\frac{3}{2}\big)(r!)^2}{(2r)!h^{2r}\sqrt{\pi}(2r+1)\pi^{2r}b^{2r}}
		&< \frac{2d}{\sqrt{\pi}}\left(\frac{2}{\pi b h}\right)^{2r} \frac{\sqrt{2r+\frac{3}{2}}}{2r+1}(r!)^2\\
		&\le \frac{2d}{\sqrt{\pi}}\left(\frac{2}{\pi b h}\right)^{2r} \frac{\sqrt{2r+\frac{3}{2}}}{2r+1} \e^2 r^{2r+1} \e^{-2r}\\
		&\le d \frac{2\e^2}{\sqrt{\pi}} \sqrt{r} \left(\frac{2r}{\pi\e \cdot bh}\right)^{2r}.
	\end{split}
	\end{equation}
	
	The choice $h= \frac{2p+3}{\e\pi} \sqrt[p]{d}\cdot \frac{1}{b}$ leads to
	\begin{equation}\label{eq:lowerBoundInghamConst4}
		1- d \frac{2\e^2}{\sqrt{\pi}} \sqrt{r} \left(\frac{2r}{\pi\e \cdot bh}\right)^{2r}
		= 1- d \frac{2\e^2}{\sqrt{\pi}} \sqrt{r} \left(\frac{2r}{(4r+3)\sqrt[2r]{d}}\right)^{2r}
		=1- \frac{2\e^2}{\sqrt{\pi}} \sqrt{r} \left(\frac{2r}{(4r+3)}\right)^{2r}
		\ge \frac{1}{2}
	\end{equation}
	and
	\begin{equation}\label{eq:lowerBoundInghamConst5}
		\sqrt{\frac{2}{\pi}} \cdot \frac{\sqrt{r}}{h}
		= \frac{\sqrt{2r \pi}b\e}{(4r+3)\sqrt[2r]{d}}.
	\end{equation}
	Finally, setting $r=\ceil{\log(d)}$ and combining \cref{eq:lowerBoundInghamConst3,eq:lowerBoundInghamConst4,eq:lowerBoundInghamConst5}, yields
	\begin{equation*}
		\frac{\psi(0)}{\widehat{\psi}(0)}
		> \frac{1}{2} \left(\frac{4}{3}\cdot  \frac{b}{\sqrt{2\log(d)+2}\cdot  d^{\frac{1}{2\log(d)+2}}}\right)^d
		> \frac{1}{2} \left(\frac{4b}{3\sqrt{2}\cdot\e^2\sqrt{\log(d)+1}}\right)^d.
	\end{equation*}
\end{proof}

\begin{thm}\label{thm:inghamBound}
	Let $\mat{A}$ be the Vandermonde matrix from \cref{eq:vandermondeDefMulti} with well-separated nodes that further satisfy
	\begin{equation*}
		q (N-1)
		\ge \frac{8\log(d)+14}{\pi}.
	\end{equation*}
	Then the smallest singular value of $\mat{A}$ is bounded by
	\begin{equation*}
		\smin(\mat{A})
		\ge \frac{1}{\sqrt{2}} \left(\frac{\sqrt{2}}{3\e^2} \cdot \frac{1}{\sqrt{\log( d)+1}}\right)^{\frac{d}{2}} (N-1)^{\frac{d}{2}}.
	\end{equation*}
\end{thm}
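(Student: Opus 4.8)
The plan is to combine the classical duality for Vandermonde Gram matrices with the localising function $\psi$. It suffices to show $\normtwo{\mat{A}^{*}\vec u}^{2}\ge\beta^{2}\normtwo{\vec u}^{2}$ for every $\vec u\in\C^{M}$, with $\beta$ the asserted bound, since this forces $\mat{A}\mat{A}^{*}$ to have smallest eigenvalue at least $\beta^{2}$, hence $\smin(\mat{A})\ge\beta$. Using $(\mat{A})_{j,\mInd}=\eip{\mInd\cdot\vec{t}_{j}}$,
\[
 \normtwo{\mat{A}^{*}\vec u}^{2}=\sum_{\mInd\in\{0,\dots,N-1\}^{d}}\Big|\sum_{j=1}^{M}u_{j}\,\eim{\mInd\cdot\vec{t}_{j}}\Big|^{2}.
\]
Since this frequency set is one-sided, I would first replace $\vec u$ by $\diag\big(\eim{\vec{k}\cdot\vec{t}_{j}}\big)_{j}\vec u$ with $\vec{k}=\floor{N/2}\,\mathbf{1}$; this changes neither $\normtwo{\mat{A}^{*}\vec u}$ nor $\normtwo{\vec u}$ but recentres the index set to a symmetric cube $J$ with $\norm{\mInd}_{\infty}\le\floor{N/2}$ on $J$ and $\norm{\mInd}_{\infty}>(N-1)/2$ on $\Z^{d}\setminus J$. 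So I may assume the sum runs over $J$; write $\widehat g(\mInd):=\sum_{j}u_{j}\,\eim{\mInd\cdot\vec{t}_{j}}$ for the (modulated) coefficients.

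Next introduce the scaled periodisation $\Psi(\vec t):=\sum_{\vec r\in\Z^{d}}\psi\big(c(\vec t+\vec r)\big)$ on $\T^{d}$ with $c:=(N-1)/(2b)$; it is real, even, continuous, and has Fourier coefficients $\widehat\Psi(\mInd)=c^{-d}\widehat\psi(\mInd/c)$, with $\sum_{\mInd}|\widehat\Psi(\mInd)|<\infty$ because of the product form of $\widehat\psi$ recorded in \cref{la:evalsInZero}. Two facts drive the estimate: first, for $\mInd\in\Z^{d}\setminus J$ one has $\norm{\mInd/c}_{p}\ge\norm{\mInd}_{\infty}/c>(N-1)/(2c)=b$, so $\widehat\psi(\mInd/c)\le0$, i.e.\ $\widehat\Psi(\mInd)\le0$ off $J$; second, $\widehat\Psi(\mInd)\le\widehat\Psi(\mathbf{0})=c^{-d}\widehat\psi(\mathbf{0})$ for all $\mInd$, by the maximality of $\widehat\psi$ at the origin from \cref{la:evalsInZero}. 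Combining these,
\[
 \sum_{\mInd\in J}|\widehat g(\mInd)|^{2}\ \ge\ \frac{c^{d}}{\widehat\psi(\mathbf{0})}\sum_{\mInd\in\Z^{d}}\widehat\Psi(\mInd)\,|\widehat g(\mInd)|^{2}.
\]

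The full sum is then expanded by Fourier inversion into $\sum_{j,k}u_{j}\conj{u_{k}}\,\Psi(\vec t_{j}-\vec t_{k})$. As $\supp\psi=[-h,h]^{d}$, a translate $\psi(c(\vec s+\vec r))$ is nonzero only when $\norm{\vec s+\vec r}_{\infty}<h/c$, so $\Psi(\vec s)=0$ whenever $\wrapm{\vec s}\ge h/c$, while $\Psi(\mathbf{0})=\psi(\mathbf{0})$. By \cref{la:lowerBoundInghamConst} with $p=2\ceil{\log d}$ one has $h\le(4\log d+7)/(b\pi)$, hence the hypothesis $q(N-1)\ge(8\log d+14)/\pi$ gives $h/c=2hb/(N-1)\le q$; since the nodes are $q$-separated, every off-diagonal term vanishes and the double sum collapses to $\psi(\mathbf{0})\normtwo{\vec u}^{2}$. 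Therefore $\smin(\mat{A})^{2}\ge c^{d}\,\psi(\mathbf{0})/\widehat\psi(\mathbf{0})=\big(\tfrac{N-1}{2b}\big)^{d}\psi(\mathbf{0})/\widehat\psi(\mathbf{0})$, and inserting the last inequality of \cref{la:lowerBoundInghamConst}, namely $\psi(\mathbf{0})/\widehat\psi(\mathbf{0})>\tfrac12\big(\tfrac{4b}{3\sqrt2\,\e^{2}\sqrt{\log d+1}}\big)^{d}$, the powers of $b$ cancel and $\tfrac{2}{3\sqrt2}=\tfrac{\sqrt2}{3}$, which yields exactly the asserted bound.

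The one delicate point I expect is the coupling in the middle step: the scale $c$ and the support radius $h$ must be chosen so that the frequency tail $\Z^{d}\setminus J$ lands in the region $\{\widehat\psi\le0\}$ \emph{and} the translated copies of $\psi$ appearing in $\Psi$ stay pairwise $q$-apart. The competition between $c\le(N-1)/(2b)$ and $h/c\le q$ is precisely what produces the separation constant $(8\log d+14)/\pi$; the extra factor $2$ over a bandwidth-only count is the cost of recentring the one-sided frequency range. Everything quantitative beyond that is already contained in \cref{la:evalsInZero,la:lowerBoundInghamConst}, and the remaining work is bookkeeping (support counting, absolute convergence of the Fourier series, the unitary modulation).
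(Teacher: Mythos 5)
Your proof is correct and is essentially the paper's own argument: after recentering the frequency cube by a unitary modulation, the Poisson summation/periodisation identity $\widehat{\Psi}(\mInd)=c^{-d}\widehat\psi(\mInd/c)$, the sign of $\widehat\psi$ outside the admissible cube, the maximality $\max\widehat\psi=\widehat\psi(0)$, the vanishing of off-diagonal terms from $\supp\psi\subset[-h,h]^d$ and the $q$-separation, and the final appeal to \cref{la:lowerBoundInghamConst} with the cancellation of the free scale parameter all match the paper's proof (which simply fixes $b=(N-1)/2$, making your $c=1$, and phrases the periodisation step as Poisson summation). The only blemish is a sign typo in the modulation $\vec{k}=\floor{N/2}\vec{1}$, which shifts the index cube to $\{k,\dotsc,N-1+k\}^d$ rather than the symmetric one; replacing $\eim{\vec{k}\cdot\vec{t}_j}$ by $\eip{\vec{k}\cdot\vec{t}_j}$ (or $\vec{k}$ by $-\vec{k}$) fixes this.
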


\begin{proof}
	We follow the proof of \cite[Cor.~2.5]{KuMoPeOh2017}.
	Since we have
	\begin{equation*}
		\mat{A}
		= \diag\left(\vec{z}_j^{\big(\ceil{\frac{N-1}{2}},\dotsc,\ceil{\frac{N-1}{2}}\big)}\right) \cdot \widetilde{\mat{A}}
		\quad \text{with}\quad 
		\widetilde{\mat{A}}
		:=\pmat{\vec{z}_j^{\vec{\nu}}}_{\substack{j=1,\dotsc,M\\ \vec{\nu} \in \set{-\ceil{\frac{N-1}{2}},\dotsc,\floor{\frac{N-1}{2}}}^d}}
	\end{equation*}
	and that singular values are unitary invariant, it holds $\smin(\mat{A})=\smin({\widetilde{\mat{A}}})$.
	Now, we use the function $\psi$ from \cref{def:function} with parameters $b=\frac{N-1}{2}$, $p=2\log(d)$ and $h=\frac{8\log(d)+14}{(N-1)\pi}$ to obtain the following estimate by using the properties of $\psi$ and the Poisson summation formula.
	Furthermore, notice that the condition on the separation distance $q$ says that we have a $h$-separated node set.
	For arbitrary $\vec{u}\in \C^M$, we have
	\begin{align*}
		\widehat{\psi}(0) \norm{\widetilde{\mat{A}}^*\vec{u}}^2
		&= \max_{\vec{x}\in\R^d} \widehat\psi(\vec{x}) \sum_{\vec{\nu} \in \set{-\ceil{\frac{N-1}{2}},\dotsc,\floor{\frac{N-1}{2}}}^d} \left|\sum_{j=1}^M \comp{\vec{u}}{j} \eip{\vec{\nu}^* \vec{t}_j}\right|^2\\
		&\ge
		\sum_{\vec{\nu}\in\Z^d} \widehat \psi(\vec{\nu}) \left|\sum_{j=1}^M \comp{\vec{u}}{j} \eip{\vec{\nu}^* \vec{t}_j}\right|^2\\
		&= \sum_{j=1}^M \sum_{\ell=1}^M \comp{\vec{u}}{j} \overline{\comp{\vec{u}}{\ell}} \sum_{\vec{r}\in\Z^d} \psi(\vec{t}_j-\vec{t}_{\ell} + \vec{r})
		=\psi(\vec{0}) \sum_{j=1}^M \left|\comp{\vec{u}}{j}\right|^2
		=\psi(\vec{0}) \norm{\vec{u}}^2.
	\end{align*}	
	Using the variational characterization of the smallest singular value leads to
	\begin{equation*}
		\smin(\mat{A})^2
		= \min_{\vec{u}\in \C^{M}\setminus\set{0}} \frac{\norm{\widetilde{\mat{A}}^*\vec{u}}^2}{\norm{\vec{u}}^2}
		\ge \frac{\psi(0)}{\widehat{\psi}(0)}.
	\end{equation*}
	Applying \cref{la:lowerBoundInghamConst} with the substitution of $b$ with $(N-1)/2$ yields the result.
\end{proof}

\begin{remark}\label{rem:r123}
    Under the assumed scaling $qN=a\log d +b \nearrow s$, $s\in\N$, $a>0$, $b\in\R$, \cref{thm:condWell} iii) yields the almost matching bound
    \begin{equation*}
     N^{-d}\smin^2(\mat{A})\le c_1\left(1-\frac{1}{a\log d+b}\right)^d\le \exp\left(-\frac{c_2\cdot d}{\log d}\right)
    \end{equation*}
    for some absolute constants $c_1,c_2>0$.

	Using \cref{eq:lowerBoundInghamConst1} directly for special small values of $r$ leads to explicit expressions for $\psi(0)/\widehat{\psi}(0)$.
	After fixing $r$, the parameter $h$ is chosen such the term becomes maximal,
	\begin{equation*}
		h\cdot b
		=\begin{cases}
			\frac{\sqrt{5}}{\sqrt{2}\pi}\sqrt{d+2}, & r=1,\\
			\frac{\sqrt{3}}{\pi}\left(\frac{7}{2}\right)^{\frac{1}{4}}(d+4)^{\frac{1}{4}} , & r=2,\\
			\frac{\sqrt{3}\cdot 143^{1/6}}{2^{1/3}\pi}(d+6)^{\frac{1}{6}}, & r=3
		\end{cases}
	\end{equation*}
	and we finally obtain
	\begin{equation*}
		\frac{b^{-d} \psi(0)}{\widehat{\psi}(0)}
		=\begin{cases}
			2^{\frac{3 d}{2}+1} 5^{-\frac{3 d}{2}} (d+2)^{-\frac{d}{2}-1} (3 \pi )^d, & r=1,\\
			2^{\frac{5 d}{4}+2} 3^{-\frac{d}{2}} 7^{-\frac{5 d}{4}} (d+4)^{-\frac{d}{4}-1} (5 \pi )^d, & r=2,\\
			2^{\frac{7 d}{3}+1} 3^{1-\frac{3 d}{2}} 143^{-\frac{7 d}{6}} (d+6)^{-\frac{d}{6}-1} (175 \pi )^d, & r=3.
		\end{cases}
	\end{equation*}
	Resulting bounds for the smallest singular of $\mat{A}$ and the conditions on the separation $q$ from the choice of $h$, can be obtained analogously to the proof of \cref{thm:inghamBound} and are given in \cref{tab:values} for the first dimensions.

	\begin{table}[h]
	\centering
	\begin{tabular}[h]{|l|c|c|c|c|c|c|}
		\hline
		& \multicolumn{3}{c}{$q(N-1)\ge$}\vline & \multicolumn{3}{c}{$\smin(\mat{A}_N) (N-1)^{-\frac{d}{2}}\ge$}\vline \\
		\hline
		\diagbox{$r$}{$d$} &$1$ & $2$ & $3$ & $1$ & $2$ & $3$ \\
		\hline
		$1$ &$1.744$& $2.014$&  $2.251$ &$0.677$  & $0.421$ & $0.246$\\
		\hline
		$2$ & $2.256$ & $2.361$& $2.454$& $0.711$  & $0.494$  & $0.335$\\
		\hline
		$3$ & $2.769$ & $2.831 $& $2.887$ & $0.710$  & $0.499$  & $ 0.347$\\
		\hline
	\end{tabular}
	\caption{Explicit constant of the separation condition and resulting bounds for the smalles singular value.}\label{tab:values}
	\end{table}
\end{remark}

\goodbreak
\section{Further results}
\label{sec:furtherResults}

The following result for the case of well-separated nodes was already given as a special case of a result for multivariate clustered node configurations.

\begin{thm}[{Lower bound on the smallest singular value,\cite[Ex.~4.4]{KuNa2020_1}}]
\label{thm:wellSeparatedFromMulti}
	Let $M,d, N\in \N$, $N>\max\set{M,2(d+2)^2}$, and $\mat{A} \in \C^{M\times N^d}$ be a Vandermonde matrix as in \cref{eq:vandermondeDefMulti} with separation distance satisfying
	\begin{equation*}
		qN>6d,
	\end{equation*}
	then we have
	\begin{equation*}
		\smin(\mat{A})
		\ge \frac{N^{d/2}}{3d^{d/4}}.
	\end{equation*}
\end{thm}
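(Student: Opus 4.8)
The plan is to obtain \cref{thm:wellSeparatedFromMulti} as the singleton-cluster case of the multivariate clustered-node estimate of \cite{KuNa2020_1}; below I outline a streamlined form of the underlying localizing-function argument, which runs parallel to the proof of \cref{thm:inghamBound}. As there, the first step is to left-multiply $\mat A$ by a unitary diagonal matrix and pass to $\widetilde{\mat A}=\big(\vec z_j^{\vnu}\big)_{j,\,\vnu}$ with the centered frequency window $\vnu\in\set{-\ceil{\frac{N-1}{2}},\dotsc,\floor{\frac{N-1}{2}}}^d$; singular values are unchanged, so
\begin{equation*}
	\smin(\mat A)^2=\lmin\big(\widetilde{\mat A}\widetilde{\mat A}^*\big)=\min_{\norm{\vec u}=1}\ \sum_{\vnu}\Big|\sum_{j=1}^M\comp{\vec u}{j}\,\eip{\vnu^*\vec t_j}\Big|^2 ,
\end{equation*}
the $\vnu$-sum being over the window.

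Second, I would bound the window from below by a tensor-product kernel that fits inside it. With $m:=\ceil{N/2}$, the Fejér kernel $F_m$ has Fourier support $\set{-(m-1),\dotsc,m-1}$ contained in the window; setting $g:=\bigotimes_{s=1}^d F_m$ with the normalization $\widehat g_{\vec 0}=1$, one has $0\le\widehat g_{\vnu}\le 1$ on the window and $\widehat g_{\vnu}=0$ off it, while $g(\vec 0)=F_m(0)^d=m^d$. Dropping the coefficients $\widehat g_{\vnu}\le1$ into the sum above turns the right-hand side into the quadratic form of the Hermitian matrix $\big(g(\vec t_j-\vec t_\ell)\big)_{j,\ell}$, and Gershgorin's circle theorem gives
\begin{equation*}
	\smin(\mat A)^2\ \ge\ m^d-\max_{j}\sum_{\ell\neq j}\prod_{s=1}^d F_m\!\big(\comp{\vec t_j-\vec t_\ell}{s}\big).
\end{equation*}

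The third step is the only genuinely multivariate one and, I expect, the main obstacle: controlling the off-diagonal sum under the hypothesis $qN>6d$, which with $m\ge N/2$ gives $qm>3d$. Since $q$ bounds only the $\norm{\cdot}_\infty$-distance of the nodes, decay in a single coordinate direction is useless; instead I would use an elementary majorant $F_m(t)\le m/(1+c_0m^2t^2)$ on $\abs t\le\tfrac12$ with $c_0=3$, admissible once $m$ is large enough (which $N>2(d+2)^2$ ensures), tile $\T^d$ by half-open cubes of side $q$ (or a slightly smaller divisor of $1$) centered on $\vec t_j+q\Z^d$ so that $q$-separation places at most one node in each, and dominate the contribution of a node in the cube about $\vec t_j+q\vec k$ by $m$ in every coordinate with $k_s=0$ and by $m/\big(1+c_0m^2(\abs{k_s}-\tfrac12)^2q^2\big)$ in the rest. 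Summing the ensuing product of geometric-type series yields $\sum_{\ell\neq j}\prod_sF_m\le m^d\big((1+\gamma)^d-1\big)\le m^d\,d\gamma\,\e^{d\gamma}$ with $\gamma\le\pi^2/(c_0m^2q^2)\le\pi^2/(9c_0d^2)$, hence $d\gamma\le\pi^2/27$ uniformly in $d$; therefore $\smin(\mat A)^2\ge\big(1-\tfrac{\pi^2}{27}\e^{\pi^2/27}\big)m^d>\tfrac14(N/2)^d$, and since $\tfrac14\cdot2^{-d}\ge 9^{-1}d^{-d/2}$ for every $d\ge1$ this is exactly $\smin(\mat A)\ge N^{d/2}/(3d^{d/4})$.

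The remaining hypotheses $N>M$ and $N>2(d+2)^2$ are standing assumptions inherited from the clustered-node theorem and are comfortably sufficient here: the second one forces $q<\tfrac12$ with enough slack that the cube tiling and the passage from $\ceil{N/2}$ to $N/2$ cost only lower-order factors, and it is what legitimizes the majorant constant $c_0=3$. The crux is thus entirely in the off-diagonal estimate — converting $\ell^\infty$-separation into a summable, dimension-robust bound on $\sum_{\ell\neq j}\prod_sF_m$, where the near-axis terms (one coordinate tiny, the rest moderate) carry most of the discrete mass and must be handled through the tensor/cube structure, a crude integral comparison being far too lossy. The gap between the resulting bound and the optimal $N^{d/2}$, which appears here as the loss factor $d^{-d/4}$, is an artifact of routing the well-separated case through the clustered-node machinery and is precisely what \cref{thm:inghamBound} and \cref{thm:lowerBoundKernel} are designed to remove.
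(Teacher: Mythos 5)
The paper does not prove this statement: it is imported verbatim from \cite[Ex.~4.4]{KuNa2020_1} as a known special case of a clustered-node estimate, so there is no ``paper's own proof'' to compare against. Your proposal is therefore an independent, self-contained proof, and it is a genuinely different route. Despite your framing, what you actually do is \emph{not} the singleton-cluster specialization of \cite{KuNa2020_1}; it is the same Gershgorin-plus-localizing-kernel strategy the present paper uses for \cref{thm:lowerBoundKernel}, with the sampled B-spline of \cite{KuPo2007} replaced by the tensorized Fej\'er kernel $g=\bigotimes_{s=1}^d F_m$, $m=\ceil{N/2}$, whose spectrum fits the centered window exactly. Steps one and two (passing to $\widetilde{\mat A}$, majorizing $\widehat g_{\vec\nu}\le 1$ by the indicator of the window, Gershgorin on $\big(g(\vec t_j-\vec t_\ell)\big)_{j,\ell}$) are correct. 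The cube-tiling bookkeeping in step three is also sound and does not actually require $q$ to divide $1$: rounding $(\vec t_\ell-\vec t_j)_s$ (with representatives in $(-\tfrac12,\tfrac12]$) to the nearest multiple of $q$ injects the nodes into $\Z^d$, and extending the resulting sum to all of $\Z^d\setminus\{\vec 0\}$ only enlarges it. With $\gamma:=2\sum_{k\ge 1}\big(1+c_0m^2q^2(k-\tfrac12)^2\big)^{-1}\le\pi^2/(c_0m^2q^2)$, $mq\ge qN/2>3d$, $c_0=3$, one indeed gets $d\gamma\le\pi^2/27$, $d\gamma\,\e^{d\gamma}<3/4$, $\smin^2(\mat A)\ge\tfrac14(N/2)^d$, and $\tfrac14\,2^{-d}\ge\tfrac19 d^{-d/2}$ for all $d\ge1$, which is the claimed bound.

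The one point you gloss over is the majorant $F_m(t)\le m/(1+3m^2t^2)$ on $\abs t\le\tfrac12$. It is true once $m\ge4$, but this is not immediate and the obvious crude estimates are insufficient: combining $\sin^2(\pi mt)\le1$ with $\sin(\pi t)\ge 2t$ proves it only for $mt\ge1$, and the regime $mt<1$ is the delicate one. A correct elementary argument is: for $0<u\le\pi$ one has $\sin u/u\le\e^{-u^2/6}$, and for $0<u\le\pi/2$ one has $\sin u/u\ge\e^{-1.1\,u^2/6}$ (both from the all-positive series $-\log(\sin u/u)=u^2/6+u^4/180+\dotsb$, evaluating the second ratio at $u=\pi/2$); hence for $t\le1/m$,
\begin{equation*}
F_m(t)=m\Big(\tfrac{\sin(\pi mt)}{m\sin(\pi t)}\Big)^2\le m\,\e^{-(m^2-1.1)\pi^2t^2/3}\le \frac{m}{1+(m^2-1.1)\pi^2t^2/3}\le\frac{m}{1+3m^2t^2}
\end{equation*}
once $(\pi^2-9)m^2\ge1.1\pi^2$, i.e.\ $m\ge4$, and for $t\ge1/m$ the crude bound closes the case. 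The hypothesis $N>2(d+2)^2$ gives $m\ge10$, so this is comfortably available, but it is a missing lemma rather than a triviality.

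Finally, your route actually overshoots: you obtain $\smin(\mat A)\ge\tfrac12(N/2)^{d/2}=2^{-1-d/2}N^{d/2}$, which dominates $N^{d/2}/(3d^{d/4})$ for every $d\ge1$ and strictly so for $d\ge3$. The $d^{-d/4}$ in the statement is an artifact of the clustered-node machinery of \cite{KuNa2020_1} and is simply not present in your argument; what you cannot avoid is the $2^{-d/2}$, which comes from the Fej\'er kernel occupying only half the spectral window, and is precisely what the sharper sampled B-spline kernel of \cite{KuPo2007} eliminates in \cref{thm:lowerBoundKernel} to reach the dimension-free $0.9\,N^{d/2}$.
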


Finally, we present a lower bound on the smallest singular value that is proven by means of fast decaying trigonometric kernel functions in combination with the Gershgorin circle theorem, see \cite{KuPo2007}.

\begin{thm}\label{thm:lowerBoundKernel}
 Let $M,d, N\in \N$, $d\ge 2$, $N$ being even and $\mat{A} \in \C^{M\times N^d}$ be a Vandermonde matrix as in \cref{eq:vandermondeDefMulti} with separation distance satisfying
 \begin{equation*}
		qN>4d,
 \end{equation*}
 then we have
 \begin{equation*}
  \smin(\mat{A})> 0.9 N^{d/2}.
 \end{equation*}
\end{thm}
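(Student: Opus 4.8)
The plan is to mimic the Gershgorin-disc argument behind \cref{thm:condWell} iii), but to replace the tensor-product Dirichlet/Fejér-type kernel used there with one whose Fourier coefficients are supported in the cube $\{-N/2,\dots,N/2-1\}^d$ and whose off-diagonal "leakage" over a $qN>4d$-separated node set is small enough to leave the diagonal dominant with a constant close to $1$. Concretely, I would work with $\widetilde{\mat A}$ (the centered Vandermonde matrix, as in the proof of \cref{thm:inghamBound}), so that $\smin(\mat A)=\smin(\widetilde{\mat A})$, and estimate the Gram matrix $\mat G=\widetilde{\mat A}\widetilde{\mat A}^*$ whose entries are $\mat G_{j\ell}=\sum_{\vnu}\eip{\vnu^*(\vec t_j-\vec t_\ell)}$, a product over the $d$ coordinates of one-dimensional Dirichlet kernels $D_N$. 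Since each diagonal entry equals $N^d$, the strategy is to show $\sum_{\ell\ne j}|\mat G_{j\ell}|\le (1-0.9^2)N^d=0.19\,N^d$, which by Gershgorin gives $\lmin(\mat G)\ge 0.81\,N^d$ and hence $\smin(\widetilde{\mat A})\ge 0.9\,N^{d/2}$.

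The heart of the matter is the off-diagonal bound, and here I would follow \cite{KuPo2007} closely: using the factorization $|\mat G_{j\ell}|=\prod_{s=1}^d|D_N(\comp{\vec t_j-\vec t_\ell}{s})|$ together with the elementary estimate $|D_N(t)|\le \min\{N,\,\tfrac{1}{2\wrap{t}}\}$ (or the sharper $\le (2\wrap t)^{-1}$ away from the origin), one reduces $\sum_{\ell\ne j}\prod_s |D_N(\cdot)_s|$ to a lattice sum over the images of $\Omega$ under the wrap-around metric. Because the nodes are $q$-separated in the $\ell^\infty$-metric on $\T^d$, the points $\vec t_\ell$ ($\ell\ne j$) sit in disjoint $\ell^\infty$-balls of radius $q/2$, and a standard packing/comparison argument bounds the sum by an integral; invoking \cite[Cor.~4.7]{KuPo2007} (as the table entry for \cref{thm:lowerBoundKernel} advertises) gives a clean geometric-series-type bound in terms of $1/(qN)$. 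The condition $qN>4d$ is exactly what makes this geometric series converge fast enough — each coordinate contributes a factor behaving like a constant times $1/(qN)<1/(4d)$, and the $d$-fold product against the number of "nearest shells" ($\sim 3^d$) still sums to something below $0.19$. I would also need the evenness of $N$ and $d\ge 2$ at precisely the points where \cite[Cor.~4.7]{KuPo2007} is invoked (evenness makes the centered index set symmetric so the Dirichlet kernel is real and the bound $|D_N(t)|\le(2\wrap t)^{-1}$ is clean; $d\ge2$ is what allows the product of $d$ small factors to beat the combinatorial shell count, whereas $d=1$ would only recover the weaker $N-1/q$ bound).

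The main obstacle I anticipate is getting the numerical constant down to $0.9$ rather than merely "bounded away from $0$": this requires being careful not to throw away the factor $\min\{N,(2\wrap t)^{-1}\}$ too crudely on the first shell of neighbors, and organizing the lattice sum so that the dominant contribution — nodes differing from $\vec t_j$ in a single coordinate by roughly $q$ — is bounded by $d$ copies of $N^{d-1}\cdot\sum_{k\ge 1}(2kqN)^{-1}$-type terms times the worst-case product over the remaining $d-1$ coordinates (each of which is at most $N$). Balancing "$N$ versus $1/(2\wrap t)$" per coordinate, and summing the resulting multi-geometric series, is where \cite[Cor.~4.7]{KuPo2007} does the real work, and I would quote it rather than reprove it; the remaining task is the arithmetic check that under $qN>4d$ (with $d\ge 2$, $N$ even) the total off-diagonal mass is $<0.19\,N^d$. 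Once that inequality is in hand, Gershgorin and the monotone relation $\smin(\mat A)=\sqrt{\lmin(\mat G)}$ finish the proof.
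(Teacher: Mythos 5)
Your high-level plan — reduce to the Gram matrix, apply a Gershgorin-type argument, and lean on \cite[Cor.~4.7]{KuPo2007} — points in the right general direction, but it misidentifies which Gram matrix \cite[Cor.~4.7]{KuPo2007} controls, and that misidentification is fatal to the proposed arithmetic. You propose to bound the off-diagonal entries of the \emph{raw Dirichlet Gram matrix} $\widetilde{\mat A}\widetilde{\mat A}^*$, whose $(j,\ell)$ entry is $\prod_s D_N\bigl(\comp{\vec t_j-\vec t_\ell}{s}\bigr)$, by a geometric-series argument. This cannot give a bound of the form "off-diagonal mass $< cN^d$" under a separation hypothesis $qN>4d$ alone: the Dirichlet kernel only decays like $(2\wrap{t})^{-1}$, and already in one coordinate the sum $\sum_{k\ge 1}(2kq)^{-1}$ grows like $q^{-1}\log(1/q)$. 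The resulting condition would be $qN\gtrsim\log(1/q)$ (and worse in higher $d$), not a dimension-proportional constant. This is precisely why the Gershgorin-on-raw-Dirichlet strategy is abandoned in the literature in favor of extremal functions (for $d=1$) or fast-decaying kernels.

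What \cite[Cor.~4.7]{KuPo2007} actually provides is a lower bound on $\smin(\mat A\mat D\mat A^*)$ for a \emph{weighted} Gram matrix, where $\mat D$ is a positive diagonal matrix sampling a B-spline; the associated kernel $\sum_{\mInd} D_{\mInd\mInd} \eip{\mInd^*(\vec t_j - \vec t_\ell)}$ decays polynomially of high order, so the Gershgorin sums do converge with a genuine geometric-series tail in $2d/(Nq)$, giving
$\smin(\mat A\mat D\mat A^*)\ge 1-\bigl(2d/(Nq)\bigr)^{d+1}$ for $Nq>2d$. That inequality already \emph{is} the Gershgorin step; it is not an off-diagonal estimate that you then feed into Gershgorin yourself. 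The step you are missing entirely is how to pass from $\smin(\mat A\mat D\mat A^*)$ back to $\smin(\mat A\mat A^*)$: the paper does this via the Rayleigh quotient, $\smin(\mat A\mat D\mat A^*)\le\norm{\mat D}\smin(\mat A\mat A^*)$, and then uses the explicit bound
$\norm{\mat D}\le N^{-d}\bigl(1-2\zeta(d+1)(2\pi)^{-d-1}\bigr)^{-d}$ from \cite[Thm.~3.3, Cor.~3.5]{KuPo2007}. Combining the two, tightening $Nq>2d$ to $Nq>4d$ to make $\bigl(2d/(Nq)\bigr)^{d+1}\le 2^{-d-1}$, and invoking Bernoulli's inequality together with monotonicity of $\zeta$ yields $\smin^2(\mat A)>0.85\,N^d$, hence $\smin(\mat A)>0.9\,N^{d/2}$. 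The evenness of $N$ and $d\ge2$ are conditions inherited from the B-spline construction in \cite{KuPo2007}, not from symmetrizing a Dirichlet kernel as you suggest. In short: the missing idea is the diagonal weight $\mat D$ (i.e.\ replacing the Dirichlet kernel by a B-spline kernel) and the $\norm{\mat D}$-comparison; without it, the arithmetic target "$\sum_{\ell\ne j}|\mat G_{j\ell}|\le 0.19\,N^d$" is simply not achievable under $qN>4d$.
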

\begin{proof}
 With a slightly different normalization and an irrelevant shift, Corollary 4.7 in \cite{KuPo2007} gives
 \begin{equation*}
  \smin(\mat{A}\mat{D}\mat{A}^*) \ge 1-\left(\frac{2d}{Nq}\right)^{d+1}
 \end{equation*}
 for $Nq>2d$.
 The involved diagonal matrix $\mat{D} \in \R^{N^d\times N^d}$ has positive diagonal entries and the Rayleigh-Ritz characterization of the smallest eigenvalue of Hermitian matrices allows for the estimate
 \begin{align*}
		\smin(\mat{A}\mat{D}\mat{A}^*) 
		&=\min_{\vec{x}\in\C^{M},\norm{\vec{x}}=1} \norm{\mat{D}^{1/2}\mat{A}^* \vec{x}}^2\\
		&\le \min_{\vec{x}\in\C^{M},\norm{\vec{x}}=1} \norm{\mat{D}} \norm{\mat{A}^* \vec{x}}^2
		= \norm{\mat{D}}\smin(\mat{A}\mat{A}^*).
	\end{align*}
 The diagonal matrix is given by a sampled B-spline and satisfies
 \begin{equation*}
  \|\mat{D}\|\le \frac{1}{N^d\left(1-2\zeta(d+1)(2\pi)^{-d-1}\right)^d},
 \end{equation*}
 see \cite[Thm.~3.3 and the proof of Cor.~3.5]{KuPo2007}, where $\zeta$ denotes the Riemann zeta function.
 We finally use the assumption $Nq>4d$, Bernoulli's inequality, and monotony of Riemann's zeta function to estimate
 \begin{align*}
  \smin^2(\mat{A})&\ge N^d\left(1-2\zeta(d+1)(2\pi)^{-d-1}\right)^d \left(1-2^{-d-1}\right)\\
  &\ge N^d\left(1-2d\zeta(3)(2\pi)^{-d-1}\right) \left(1-2^{-d-1}\right)\\
  &>0.85 N^d.
 \end{align*}
\end{proof}

We close by noting that the result of \cref{thm:lowerBoundKernel} cannot be achieved under considerably weaker assumptions, since for $qN\in o(d)$, \cref{thm:condWell} iii) yields
    \begin{equation*}
     N^{-d}\smin^2(\mat{A})\le \left(1-\frac{1}{o(d)}\right)^d \to 0
    \end{equation*}
    for $d\to\infty$.
    
\bibliographystyle{abbrv}
\bibliography{references}

\end{document}